\documentclass[11pt]{amsart}

\usepackage{amsthm}
\usepackage{amsmath}
\usepackage{amssymb}

\newtheorem{thm}{Theorem}[section]
\newtheorem{theorem}[thm]{Theorem}

\theoremstyle{remark}
\newtheorem{remark}[thm]{Remark}

\newcommand{\RR}{\mathbb R}

\newcommand{\cH}{\mathbb H}

\title{Algorithmic constructions of unitary matrices and tight frames}
\author[J.C. Tremain ]{Janet C. Tremain }
\address{Department of Mathematics, University
of Missouri, Columbia, MO 65211-4100}

\thanks{The author is supported by DTRA/NSF DMS 1042701}

\email{j.tremain@mchsi.com}

\begin{document}

\maketitle

\begin{abstract}
We give a number of algorithms for constructing unitary matrices and tight
frames with specialized properties.  These were produced at the request
of researchers at the Frame Research Center\\  (www.framerc.org) to help with
their research on fusion frames, the Kadison-Singer Problem and equiangular
tight frames.
\end{abstract}

\section{Introduction}

We will give a sequence of algorithms for constructing unitary matrices
(i.e. orthonormal bases) and tight frames with specialized properties.
For unitary matrices, we just need to construct square matrices with
rows and columns orthogonal and row and column numbers square summing
to one.  For tight frames, we need to produce $M\times N$ matrices with
$M\ge N$ which have orthogonal columns and whose column numbers square
sum to a fixed constant.  Then the row vectors of this matrix will form a tight frame
for $\cH_N$.  We will be interested in a number of special properties such as
sparse orthonormal bases for hyperplanes,
matrices with first row a constant, first two rows a constant, matrices made from
just 2 constants, matrices built out of unitary matrices, matrices with a large portion
of the norms of their vectors allocated to special positions in the matrix etc.
We will work specifically with 2-tight frames since it is known that {\it paving}
these is equivalent to solving the famous Kadison-Singer Problem \cite{CE,CT,CT1}.

Recall that a family of vectors $\{f_i\}_{i=1}^M$ is a {\it frame} for a Hilbert space
$\cH_N$ if there are constants $0<A\le B <\infty$, called the {\it lower and upper
frame bounds} respectively so that
\[ A\|f\|^2 \le \sum_{i=1}^M |\langle f,f_i\rangle |^2 \le B \|f\|^2.\]
The frame is {\it A-tight} if $A=B$ and {\it Parseval} if $A=B=1$.  It is equal norm
of $\|f_i\|=c$, for all $i=1,2,\ldots,M$, and {\it unit norm} if $c=1$.  The {\it analysis
operator} of the frame is $T:\cH_N \rightarrow \ell_2^M$ given by
\[ Tf = \sum_{i=1}^M \langle f,f_i\rangle e_i,\]
where $\{e_i\}_{i=1}^M$ is the natural orthonormal basis of $\ell_2^M$.  The 
adjoint of the analysis operator is the {\it synthesis operator} which satisfies
\[ T^*\left ( \{a_i\}_{i=1}^M \right ) = \sum_{i=1}^M a_if_i.\]
The frame operator is the positive, self-adjoint invertible operator given by
\[ S =: T^*Tf = \sum_{i=1}^M \langle f,f_i\rangle f_i.\]
The synthesis operator
of the frame is the matrix with the frame vectors as column vectors:
\[ A = \begin{bmatrix}
|&&|&|&|\\
f_1&\cdots&f_2&\cdots&f_M\\	
|&&|&|&|
\end{bmatrix}
\] 

\section{Sparse Orthonormal Bases for Hyperplanes}

For their work on {\it sparsity} the researchers at the Frame Research Center needed
the sparsest orthonormal bases for hyperplanes in $\RR^N$ which do not contain
the span of any subset of the orthonormal basis.  We believe that this class of hyperplanes
is given by the following cases - although at this time we do not have a proof that these
are the sparsest.

\vskip12pt
\noindent {\bf dimension n=2:}
 \[
A_2 =  \begin{bmatrix}
 1&1
 \end{bmatrix} \]
 \vskip12pt
 
 \noindent {\bf dimension n=3:}
 \[ \begin{bmatrix}
 1&2&1\\
 0& -1&2
 \end{bmatrix} 
 \]
 \vskip12pt
\noindent {\bf dimension n=4:}

\[ A_4 = \begin{bmatrix}
1&1&1&1\\
1&-1&0&0\\
0&0&1&-1
\end{bmatrix}
\]
The above has sparsity $8$ and it is easily checked that any orthonormal basis for
this hyperplane in $\RR^4$ has sparsity $\ge 8$.
\vskip12pt
\noindent {\bf dimension n=5:}  Here we combine the earlier cases of $A_2$ and 
$A_3$ and then add a non-zero row on top.
 \[ A_5=\begin{bmatrix}
 *&*&*&*&*\\
 *&*&*&0&0\\
 0&*&*&0&0&\\
 0&0&0&*&*
 \end{bmatrix} \]
The above has sparsity 11.

\vskip12pt
\noindent {\bf dimension n=6:}  Here, we will combine the cases $2n=2$ with $2n=4$
and then insert a new top row.
\[ A_6 = \begin{bmatrix}
1&1&1&1&1&1\\
1&1&-1&-1&0&0\\
1&-1&0&0&0&0\\
0&0&1&-1&0&0\\
0&0&0&0& 1&-1
\end{bmatrix}
\]
The above has sparsity 16.
\vskip12pt
\noindent {dimension $n=7$:}  Here we combine the earlier cases of
$n=4$ and $n=3$.
\vskip12pt
\[ A_7=
 \begin{bmatrix}
 *&*&*&*&*&*&*\\
 *&*&*&*&0&0&0\\
 *&*&0&0&0&0&0\\
 0&0&*&*&0&0&0\\
 0&0&0&0&*&*&*\\
 0&0&0&0&0&*&*
 \end{bmatrix} \]
This has sparsity 20. 

\vskip12pt
\noindent {\bf dimension n=8:}  In this case we combine two of the cases $2n=4$ and
add a new top row.
 \[ A_8=\begin{bmatrix}
 1&1&1&1&1&1&1&1\\
 1&1&-1&-1&0&0&0&0\\
 1&-1&0&0&0&0&0&0\\
 0&0&1&-1&0&0&0&0&\\
 0&0&0&0&1&1&-1&-1\\
 0&0&0&0& 1&-1&0&0&\\
 0&0&0&0&0&0&1&-1
 \end{bmatrix}
 \]
This has sparsity 24.
\vskip12pt
The general cases here are:
\vskip12pt
\noindent {\bf dimension 2n with n even:}
We combine two of the cases $A_{2n}$ and add a new top row.
\[ A_{4n}=\begin{bmatrix}
2n\ ones & 2n\ ones\\
A_{2n}& A_{2n}
\end{bmatrix} 
\]
This has sparsity:  
\[ 2\mbox{[sparsity of }A_{2n}] + 4n.\]

\noindent {\bf dimension 2(n+1), n+1 odd:}
We combine the previous cases n+2 and n.
\[ \begin{bmatrix}
n+2 \ ones & n \ ones\\
A_{n+2}& A_{n}
\end{bmatrix}\]
This has sparsity
\[ \mbox{sparsity of }A_{n+2} + \mbox{ sparsity of }A_{n} + 2n+2.\]

\vskip12pt
\noindent {\bf dimension $2n+1=2^k+1+2i$ with i even:}  Here we combine the cases
$n=2^{k-1}+2i$ and $2^{k-1}+1$ and add a new top row.
\[ A_{2n+1}=\begin{bmatrix}
2^{k-1}+2i\ stars & 2^{k-1}+1\ stars\\
A_{2^{k-1}+2i}& A_{2^{k-1}+1}
\end{bmatrix} 
\]

\vskip12pt

\noindent {\bf dimension $2n+1=2^k+1+2i$ with i odd:}
\vskip12pt
\vskip12pt
\[ A_{2n+1}=\begin{bmatrix}
2^{k-1}\ stars & 2^{k-1}+1+2i\ stars\\
A_{2^{k-1}+2i}& A_{2^{k-1}+1+2i}
\end{bmatrix} 
\]

\section{Unitary Matrices with  First Row a Constant}

The researchers at the Frame Research Center needed unitary matrices with the
first row a constant and maximal sparsity for their work on equiangular fusion frames.
They also wanted a large number of zero's in the matrix.
For this algorithm, we can just alter the top row, then add a new top row and first column to the
previous example.

To make this more clear, we do two small examples and then the general case.  First we do
$3 \times 3$.

\[ A_3 = \begin{bmatrix}
\frac{1}{\sqrt{3}}& \frac{1}{\sqrt{3}}& \frac{1}{\sqrt{3}}\\
 -\frac{2}{\sqrt{3\cdot 2}} & \frac{1}{\sqrt{3\cdot 2}}  & \frac{1}{\sqrt{3\cdot 2}} \\
0 & - \frac{1}{\sqrt{2}}& \frac{1}{\sqrt{2}}
\end{bmatrix}\]

Next we do $4\times 4$.

\[ A_4= \begin{bmatrix}
\frac{1}{\sqrt{4}}&\frac{1}{\sqrt{4}}&\frac{1}{\sqrt{4}}&\frac{1}{\sqrt{4}}\\
-\frac{3}{\sqrt{4 \cdot 3}}  
& \frac{1}{\sqrt{4 \cdot 3}}  & \frac{1}{\sqrt{4 \cdot 3}}  & \frac{1}{\sqrt{4 \cdot 3}}\\
0& -\frac{2}{\sqrt{3\cdot 2}} & \frac{1}{\sqrt{3\cdot 2}}  & \frac{1}{\sqrt{3\cdot 2}} \\
0& 0 & - \frac{1}{\sqrt{2}}& \frac{1}{\sqrt{2}}
\end{bmatrix}\]

Now for the general $n\times n$ case.

\[ A_n = \begin{bmatrix}
\frac{1}{\sqrt{n}}& \cdots & \frac{1}{\sqrt{n}}& \frac{1}{\sqrt{n}}& \frac{1}{\sqrt{n}} & \frac{1}{\sqrt{n}}
& \frac{1}{\sqrt{n}}\\
- \frac{n-1}{\sqrt{n(n-1)}}& \cdots & \frac{1}{\sqrt{n(n-1)}}& \frac{1}{\sqrt{n(n-1)}}&
 \frac{1}{\sqrt{n(n-1)}}& \frac{1}{\sqrt{n(n-1)}}& 
\frac{1}{\sqrt{n(n-1)}}\\
\vdots & \vdots & \vdots & \vdots & \vdots & \vdots & \vdots\\
0 & \cdots & -\frac{4}{\sqrt{5 \cdot 4}}& \frac{1}{\sqrt{5 \cdot 4}}&
 \frac{1}{\sqrt{5 \cdot 4}}& \frac{1}{\sqrt{5 \cdot 4}}&
\frac{1}{\sqrt{5 \cdot 4}}\\
0 & \cdots &0 & -\frac{3}{\sqrt{4 \cdot 3}}  
& \frac{1}{\sqrt{4 \cdot 3}}  & \frac{1}{\sqrt{4 \cdot 3}}  & \frac{1}{\sqrt{4 \cdot 3}}\\
0& \cdots & 0 &0& -\frac{2}{\sqrt{3\cdot 2}} & \frac{1}{\sqrt{3\cdot 2}}  & \frac{1}{\sqrt{3\cdot 2}}   \\
0 & \cdots & 0 & 0 & 0 & - \frac{1}{\sqrt{2}}& \frac{1}{\sqrt{2}}
\end{bmatrix}\]

\section{Unitary Matrices with First Two Rows a Constant Modulus}

The researchers at the FRC needed these matrices for their work on constructing
tight fusion frames.

Again, we give a few small cases to get this started.
\vskip12pt
\noindent {\bf dimension 4:}
\vskip12pt
\[ A_4 = \begin{bmatrix}
 -1&1&1&1\\
1&-1&1&1\\
1&1&-1&1\\
1&1&1&-1
\end{bmatrix}\]
\vskip12pt
\noindent {\bf dimension 6:}
\vskip12pt
\[ A_6 = \begin{bmatrix}
\sqrt{\frac{1}{6}}&\sqrt{\frac{1}{6}}&\sqrt{\frac{1}{6}}&
\sqrt{\frac{1}{6}}&\sqrt{\frac{1}{6}}&\sqrt{\frac{1}{6}}\\
-\sqrt{\frac{1}{6}}&-\sqrt{\frac{1}{6}}&-\sqrt{\frac{1}{6}}&
\sqrt{\frac{1}{6}}&\sqrt{\frac{1}{6}}&\sqrt{\frac{1}{6}}\\
-\sqrt{\frac{2}{3}}&\sqrt{\frac{1}{6}}&\sqrt{\frac{1}{6}}&
0&0&0\\
0& -\sqrt{\frac{1}{2}}&\sqrt{\frac{1}{2}}&0&0&0\\
0&0&0&-\sqrt{\frac{2}{3}}&\sqrt{\frac{1}{6}}& \sqrt{\frac{1}{6}}\\
0&0&0&0& -\sqrt{\frac{1}{2}}&\sqrt{\frac{1}{2}}
\end{bmatrix}
\]

\vskip12pt
For $A_8$ we can use a Hadamard again.  But I was asked for "sparse"
matrices doing this.
\vskip12pt
\noindent {\bf dimension 8:}
\vskip12pt
\[ A_8 = \begin{bmatrix}
\sqrt{\frac{1}{8}}&\sqrt{\frac{1}{8}}&\sqrt{\frac{1}{8}}&
\sqrt{\frac{1}{8}}&
\sqrt{\frac{1}{8}}&\sqrt{\frac{1}{8}}&\sqrt{\frac{1}{8}}&
\sqrt{\frac{1}{8}}\\
-\sqrt{\frac{1}{8}}&-\sqrt{\frac{1}{8}}&-\sqrt{\frac{1}{8}}&
-\sqrt{\frac{1}{8}}&
\sqrt{\frac{1}{8}}&\sqrt{\frac{1}{8}}&\sqrt{\frac{1}{8}}&
\sqrt{\frac{1}{8}}\\
-\sqrt{\frac{3}{4}}&\sqrt{\frac{1}{12}}&\sqrt{\frac{1}{12}}&
\sqrt{\frac{1}{12}}&0&0&0&0\\
0& -\sqrt{\frac{2}{3}}&\sqrt{\frac{1}{6}}& \sqrt{\frac{1}{6}}&0&0&0&0\\
0&0&-\sqrt{\frac{1}{2}}& \sqrt{\frac{1}{2}}&0&0&0&0\\
0&0&0&0&-\sqrt{\frac{3}{4}}&\sqrt{\frac{1}{12}}&\sqrt{\frac{1}{12}}&
\sqrt{\frac{1}{12}}\\
0&0&0&0&0& -\sqrt{\frac{2}{3}}&\sqrt{\frac{1}{6}}& \sqrt{\frac{1}{6}}\\
0&0&0&0&0&0&-\sqrt{\frac{1}{2}}& \sqrt{\frac{1}{2}}
\end{bmatrix} \]
\vskip12pt
And in general:
\vskip12pt
\noindent {\bf $A_{2n}$=}
\vskip12pt
\[  \begin{bmatrix}
\sqrt{\frac{1}{2n}}& \sqrt{\frac{1}{2n}}&\cdots & \sqrt{\frac{1}{2n}}&\sqrt{\frac{1}{2n}}&
\sqrt{\frac{1}{2n}}&\cdots &\sqrt{\frac{1}{2n}}\\
-\sqrt{\frac{1}{2n}}& -\sqrt{\frac{1}{2n}}&\cdots &- \sqrt{\frac{1}{2n}}&\sqrt{\frac{1}{2n}}&
\sqrt{\frac{1}{2n}}&\cdots &\sqrt{\frac{1}{2n}}\\
-\sqrt{\frac{n-1}{n}}&\sqrt{\frac{1}{n(n-1)}}&\cdots & \sqrt{\frac{1}{n(n-1)}}&
0&0&\cdots &0\\
\vdots&\vdots &\vdots&\vdots&\vdots&\vdots&\vdots&\vdots\\
0&0&\cdots & \sqrt{\frac{1}{2}}&0&0&\cdots &0\\
0&0&\cdots &0&-\sqrt{\frac{n-1}{n}}&\sqrt{\frac{1}{n(n-1)}}&\cdots & \sqrt{\frac{1}{n(n-1)}}&\\
\vdots&\vdots &\vdots&\vdots&\vdots&\vdots&\vdots&\vdots\\
0&0&\cdots&0&0&0&\cdots &\sqrt{\frac{1}{2}}
\end{bmatrix}\]

\section{Unitary Matrices:  Made from 2 Constants}
We will look at various ways to construct unitary matrices from
just two constants.

\subsection{First Case}
In this section we construct a very special class of unitary of the
form

\[ \frac{1}{n}\begin{bmatrix}
-b&a&a&a&\cdots &a\\
a&-b&a&a&\cdots &a\\
a&a&-b&a& \cdots &a\\
\vdots&\vdots&\vdots&\vdots&\vdots&\vdots\\
a&a&a&a& \cdots &-b
\end{bmatrix}\]

For the case of $\RR^3$ this looks like:
\[ \frac{1}{3}\begin{bmatrix}
-1&2&2\\
2&-1&2\\
2&2&-1
\end{bmatrix}
\]

For the case $\RR^6$ this looks like:

\[ \frac{1}{3}\begin{bmatrix}
-2&1&1&1&1&1\\
1&-2&1&1&1&1\\
1&1&-2&1&1&1\\
1&1&1&-2&1&1\\
1&1&1&1&-2&1\\
1&1&1&1&1&-2
\end{bmatrix}\]

Or in a general form, the above looks like:

\[ \frac{1}{6}\begin{bmatrix}
-4&2&2&2&2&2\\
2&-4&2&2&2&2\\
2&2&-4&2&2&2\\
2&2&2&-4&2&2\\
2&2&2&2&-4&2\\
2&2&2&2&2&-4
\end{bmatrix}\]

And for the general case of $\RR^n$ we have:

\[ \frac{1}{n}\begin{bmatrix}
-(n-2)&2&2&\cdots &2\\
2&-(n-2)&2&\cdots &2\\
2&2&-(n-2)&\cdots &2\\
\vdots&\vdots&\vdots&\vdots&\vdots\\
2&2&2&\cdots & -(n-2)
\end{bmatrix}\]

\subsection{Second Case}

In this section we construct a class of unitary matrices of the form

\[ \begin{bmatrix}
-a&a&a&a&-b&b&b&b\\
a&-a&a&a&b&-b&b&b\\
a&a&-a&a&b&b&-b&b\\
a&a&a&-a&b&b&b&-b\\
-b&b&b&b&a&-a&-a&-a\\
b&-b&b&b&-a&a&-a&-a\\
b&b&-b&b&-a&-a&a&-a\\
b&b&b&-b&-a&-a&-a&a
\end{bmatrix}\]

The rows and columns of this matrix are clearly orthogonal.  To make the
rows and columns square sum to one, we just need to have that
$4a^2+4b^2=1$.
For example, in the above case, $b=\frac{1}{\sqrt{20}}$ and $a=2b$ work.  
The above matrix represents a generalization of the Hadamard construction.
In general it looks like:

\begin{theorem}
Let $A,B$ be $n\times n$ unitary matrices.  Choose $a,b$ so that
$a^2+b^2=1$.  Then the following matrix is unitary:

\[C = \begin{bmatrix}
aA&bB\\
aA&-bB
\end{bmatrix}\]
\end{theorem}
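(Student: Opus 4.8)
The plan is to prove unitarity by verifying that the columns of $C$ are orthonormal, i.e. that $C^*C = I_{2n}$; since $C$ is square this is equivalent to $C$ being unitary. Rather than expand entrywise, I would work with the block structure. From $C = \begin{bmatrix} aA & bB \\ aA & -bB \end{bmatrix}$ we get $C^* = \begin{bmatrix} aA^* & aA^* \\ bB^* & -bB^* \end{bmatrix}$, and the only facts needed are the unitarity relations $A^*A = B^*B = I_n$ and the scalar identity $a^2 + b^2 = 1$.

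First I would compute the off-diagonal blocks of $C^*C$. The $(1,2)$ block is $aA^*(bB) + aA^*(-bB) = ab\,A^*B - ab\,A^*B = 0$, and the $(2,1)$ block is $bB^*(aA) - bB^*(aA) = 0$. This cancellation is the easy, robust part: it comes purely from the sign flip in the lower-right block and holds for any scalars $a,b$, so the left block-column of $C$ is orthogonal to the right block-column unconditionally.

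The main obstacle is the diagonal normalization. The $(1,1)$ block evaluates to $aA^*(aA) + aA^*(aA) = 2a^2\,A^*A = 2a^2 I_n$ and the $(2,2)$ block to $b^2 B^*B + b^2 B^*B = 2b^2 I_n$, giving $C^*C = \begin{bmatrix} 2a^2 I_n & 0 \\ 0 & 2b^2 I_n \end{bmatrix}$. Equality with $I_{2n}$ forces $2a^2 = 2b^2 = 1$, i.e. $a^2 = b^2 = \tfrac12$, which is strictly stronger than $a^2 + b^2 = 1$. I therefore expect the matrix as written to be unitary only in the balanced case $a = b = 1/\sqrt2$, and I suspect the intended general construction is the one realized by the preceding $8\times 8$ example (where, taking $A = B = M$, the lower-left block is $bM$ and the lower-right is $-aM$): replacing the lower block row $\begin{bmatrix} aA & -bB \end{bmatrix}$ by $\begin{bmatrix} bA & -aB \end{bmatrix}$. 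With that cross-diagonal arrangement the identical computation yields diagonal blocks $(a^2 + b^2) I_n = I_n$ while the off-diagonal blocks still cancel, so the conclusion holds for all $a,b$ with $a^2 + b^2 = 1$. Reconciling the stated blocks with the normalization requirement is thus the crux, and I would resolve it by adjusting the lower block row as above.
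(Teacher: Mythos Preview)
Your computation is correct, and it exposes a genuine error in the stated theorem rather than a gap in your argument. The paper's proof consists of the single line ``A direct calculation gives that $C^*C=I$,'' but the direct calculation you carried out shows
\[
C^*C=\begin{bmatrix}2a^2 I_n&0\\0&2b^2 I_n\end{bmatrix},\qquad
CC^*=\begin{bmatrix}I_n&(a^2-b^2)I_n\\(a^2-b^2)I_n&I_n\end{bmatrix},
\]
so $C$ is unitary only when $a^2=b^2=\tfrac12$, not for arbitrary $a,b$ with $a^2+b^2=1$. Your diagnosis of the $8\times 8$ example is also right: in that display the four blocks are $aM,\ bM,\ bM,\ -aM$ (with $A=B=M$), i.e.\ the scalars sit on the cross-diagonal, which is precisely the arrangement that makes the normalization work. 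Your proposed repair
\[
C=\begin{bmatrix}aA&bB\\ bA&-aB\end{bmatrix}
\]
gives $C^*C=I_{2n}$ for all $a,b$ with $a^2+b^2=1$, and matches both the motivating example and the more general Theorem that follows (where the scalar matrix $(a_{ij})$ is itself unitary). So you have not missed anything: the statement needs the correction you supply, and once corrected your block computation \emph{is} the intended ``direct calculation.''
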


\begin{proof}
A direct calculation gives that $C^*C=I$.
\end{proof}

More general than this is:

\begin{theorem}
Let $A_1,A_2,\ldots,A_n$ be $m\times m$ unitary matrices and let
\[ \left ( a_{ij} \right )_{i,j=1}^n,\mbox{ be a unitary matrix.}\]
Then the following matrix is a $nm\times nm$ unitary matrix:
\[ B=\begin{bmatrix}
a_{11}A_1& a_{12}A_2 & \cdots &a_{1n}A_n\\
a_{21}A_1&a_{22}A_2& \cdots &a_{2n}A_n\\
\vdots & \vdots& \vdots& \vdots\\
a_{n1}A_1&a_{2n}A_2 & \cdots & a_{nn}A_n
\end{bmatrix}\]
\end{theorem}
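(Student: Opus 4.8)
\medskip

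The plan is to verify directly that $B^*B = I_{nm}$, which for a square matrix already forces $B$ to be unitary. Write $U = (a_{ij})_{i,j=1}^n$ for the given $n\times n$ unitary matrix and regard $B$ as an $n\times n$ array of $m\times m$ blocks, the $(i,j)$ block being $a_{ij}A_j$. The adjoint $B^*$ is then the array whose $(i,j)$ block is the conjugate transpose of the $(j,i)$ block of $B$, namely $\overline{a_{ji}}\,A_i^*$; note that the complex conjugation enters only through the scalars $a_{ij}$, while transposition acts on the blocks $A_i$.

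\medskip

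Next I would multiply block-wise. The $(j,k)$ block of $B^*B$ is the sum over $i$ of (the $(j,i)$ block of $B^*$) times (the $(i,k)$ block of $B$), i.e.
\[
(B^*B)_{jk} \;=\; \sum_{i=1}^n \overline{a_{ij}}\,A_j^*\,a_{ik}A_k \;=\; A_j^*\Bigl(\sum_{i=1}^n \overline{a_{ij}}\,a_{ik}\Bigr)A_k ,
\]
where on the right I have pulled the $i$-independent factors $A_j^*$ and $A_k$ outside the scalar sum. The remaining scalar $\sum_{i=1}^n \overline{a_{ij}}\,a_{ik}$ is exactly the $(j,k)$ entry of $U^*U$, which equals $\delta_{jk}$ because $U$ is unitary. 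Hence $(B^*B)_{jk} = \delta_{jk}\,A_j^*A_k$, which is the zero block when $j\neq k$ and equals $A_j^*A_j = I_m$ when $j=k$. Assembling the blocks gives $B^*B = I_{nm}$, so $B$ is unitary.

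\medskip

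There is no serious obstacle here: the computation is the same bookkeeping that proves the previous theorem (which is the case $n=2$ with $a_{11}=a_{12}=a_{21}=a$ and $a_{22}=-b$), and the only points to watch are keeping the summation index and the left/right order of the block products straight, and remembering that the conjugates come from forming $B^*$ rather than from the $A_i$. If a coordinate-free phrasing is preferred, one can instead note that $B = (U\otimes I_m)\,\diag(A_1,\dots,A_n)$, a product of two unitary matrices, but the direct block calculation above is the cleanest route and is what I would write out.
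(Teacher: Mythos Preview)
Your proof is correct and is exactly the ``direct calculation'' the paper alludes to, carried out in full block-matrix detail; the paper's own proof consists of the single line ``A direct calculation gives that $B^*B=I$.'' One small slip in your aside: in identifying the previous theorem as the $n=2$ case you wrote $a_{12}=a$, but it should be $a_{12}=b$ (and even then the scalar matrix $\begin{pmatrix} a & b\\ a & -b\end{pmatrix}$ is unitary only when $|a|^2=|b|^2=\tfrac12$, so the previous theorem is not literally a special case of this one unless one restricts to $a=b=\tfrac{1}{\sqrt2}$).
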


\begin{proof}
A direct calculation gives that $B^*B=I$.
\end{proof}

\begin{remark}The above theorem can be generalized to tight frames - which we
do in a later section.
\end{remark}

\subsection{Third Case}

In this section we find unitary matrices of the form

\[ \begin{bmatrix}
a&a&a&-b&-b&b\\
a&a&a&-b&b&-b\\
a&a&a&b&-b&-b\\
-b&b&b&a&a&a\\
b&-b&b&a&a&a\\
b&b&-b&a&a&a
\end{bmatrix}\]

For this to work, we need:
\[ 3a^2+3b^2=1,\]
and for the inner products to be zero,
\[ 3a^2-b^2=0.\]
Solving we get:
\[ b=\pm \frac{1}{2},\mbox{ and } a = \pm \sqrt{\frac{1}{12}}.\]

\section{Two Tight Frames}

Here we will give a general construction for unit norm 2-tight frames.   The FRC
needed such examples made up of a single vector. 
Unit norm 2-tight frames
are the row vectors of a $2n\times n$ matrix having the following properties.
\vskip10pt

(1) The rows square sum to one.
\vskip10pt

(2)  The columns square sum to two.
\vskip10pt

(3)  The column vectors are orthogonal.
\vskip12pt

We start with a concrete
example and then go to the general case.

\[ \begin{bmatrix}
- \sqrt{\frac{7}{16}}& \sqrt{\frac{5}{16}}& \sqrt{\frac{3}{16}}& \sqrt{\frac{1}{16}}\\
\sqrt{\frac{7}{16}}& -\sqrt{\frac{5}{16}}& \sqrt{\frac{3}{16}}& \sqrt{\frac{1}{16}}\\
\sqrt{\frac{7}{16}}& \sqrt{\frac{5}{16}}& -\sqrt{\frac{3}{16}}& \sqrt{\frac{1}{16}}\\
\sqrt{\frac{7}{16}}& \sqrt{\frac{5}{16}}& \sqrt{\frac{3}{16}}& -\sqrt{\frac{1}{16}}\\
-\sqrt{\frac{1}{16}}& \sqrt{\frac{3}{16}}& \sqrt{\frac{5}{16}}&\sqrt{\frac{7}{16}}\\
\sqrt{\frac{1}{16}}&- \sqrt{\frac{3}{16}}& \sqrt{\frac{5}{16}}&\sqrt{\frac{7}{16}}\\
\sqrt{\frac{1}{16}}& \sqrt{\frac{3}{16}}&- \sqrt{\frac{5}{16}}&\sqrt{\frac{7}{16}}\\
\sqrt{\frac{1}{16}}& \sqrt{\frac{3}{16}}& \sqrt{\frac{5}{16}}&-\sqrt{\frac{7}{16}}\\
\end{bmatrix}\]

If we let
\[ A = \begin{bmatrix} 
- \sqrt{\frac{7}{16}}& \sqrt{\frac{5}{16}}& \sqrt{\frac{3}{16}}& \sqrt{\frac{1}{16}}\\
\sqrt{\frac{7}{16}}& -\sqrt{\frac{5}{16}}& \sqrt{\frac{3}{16}}& \sqrt{\frac{1}{16}}\\
\sqrt{\frac{7}{16}}& \sqrt{\frac{5}{16}}& -\sqrt{\frac{3}{16}}& \sqrt{\frac{1}{16}}\\
\sqrt{\frac{7}{16}}& \sqrt{\frac{5}{16}}& \sqrt{\frac{3}{16}}& -\sqrt{\frac{1}{16}}\\
\end{bmatrix}\]

We can think of reversing the rows of $A$ to get the matrix:

\[ 
A^R = \frac{1}{\sqrt{16}}
\begin{bmatrix}
\sqrt{7}&\sqrt{5}&\sqrt{3}&-\sqrt{1}\\
\sqrt{7}&\sqrt{5}&-\sqrt{3}&\sqrt{1}\\
\sqrt{7}&-\sqrt{5}&\sqrt{3}&\sqrt{1}\\
-\sqrt{7}&\sqrt{5}&\sqrt{3}&\sqrt{1}

\end{bmatrix}\]

We can also think of reversing the columns of $A$ to get:
\[ A^C = \frac{1}{16} \begin{bmatrix}
\sqrt{1}&\sqrt{3}&\sqrt{3}&-\sqrt{7}\\
\sqrt{1}&\sqrt{3}&-\sqrt{5}&\sqrt{7}\\
\sqrt{1}&-\sqrt{3}&\sqrt{5}&\sqrt{7}\\
-\sqrt{1}&\sqrt{3}&\sqrt{5}& \sqrt{7}
\end{bmatrix}\]

If we reverse the rows of $A$ and then reverse the columns of the resulting
matrix we get:

\[ A^{RC}= \begin{bmatrix}
-\sqrt{\frac{1}{16}}& \sqrt{\frac{3}{16}}& \sqrt{\frac{5}{16}}&\sqrt{\frac{7}{16}}\\
\sqrt{\frac{1}{16}}&- \sqrt{\frac{3}{16}}& \sqrt{\frac{5}{16}}&\sqrt{\frac{7}{16}}\\
\sqrt{\frac{1}{16}}& \sqrt{\frac{3}{16}}&- \sqrt{\frac{5}{16}}&\sqrt{\frac{7}{16}}\\
\sqrt{\frac{1}{16}}& \sqrt{\frac{3}{16}}& \sqrt{\frac{5}{16}}&-\sqrt{\frac{7}{16}}\\
\end{bmatrix}\]

So our first matrix is of the form:

\[ \begin{bmatrix}
A\\
A^{RC}
\end{bmatrix}\]

Let us do this one more time.
\[
\frac{1}{\sqrt{64}}\begin{bmatrix}
-\sqrt{15}& \sqrt{13}& \sqrt{11}& \sqrt{9} &-\sqrt{7}& \sqrt{5}& \sqrt{3}& \sqrt{1}\\
\sqrt{15}& -\sqrt{13}& \sqrt{11}& \sqrt{9} &\sqrt{7}& -\sqrt{5}& \sqrt{3}& \sqrt{1}\\
\sqrt{15}& \sqrt{13}&- \sqrt{11}& \sqrt{9} &\sqrt{7}& \sqrt{5}&- \sqrt{3}& \sqrt{1}\\
\sqrt{15}& \sqrt{13}& \sqrt{11}&- \sqrt{9} &\sqrt{7}& \sqrt{5}& \sqrt{3}&- \sqrt{1}\\
-\sqrt{15}& \sqrt{13}& \sqrt{11} &\sqrt{9} &\sqrt{7}&- \sqrt{5}&- \sqrt{3}&- \sqrt{1}\\
\sqrt{15}&- \sqrt{13}& \sqrt{11} &\sqrt{9} &-\sqrt{7}& \sqrt{5}&- \sqrt{3}& -\sqrt{1}\\
\sqrt{15}& \sqrt{13}&- \sqrt{11}& \sqrt{9} &-\sqrt{7}&- \sqrt{5}&\sqrt{3}& -\sqrt{1}\\
\sqrt{15}& \sqrt{13}& \sqrt{11}&- \sqrt{9} &-\sqrt{7}& -\sqrt{5}& -\sqrt{3}& \sqrt{1}\\
-\sqrt{1}& \sqrt{3}& \sqrt{5}& \sqrt{7} &-\sqrt{9}& \sqrt{11}& \sqrt{13}& \sqrt{15}\\
\sqrt{1}& -\sqrt{3}& \sqrt{5}& \sqrt{7} &\sqrt{9}&- \sqrt{11}& \sqrt{13}& \sqrt{15}\\
\sqrt{1}& \sqrt{3}& -\sqrt{5}& \sqrt{7} &\sqrt{9}& \sqrt{11}&- \sqrt{13}& \sqrt{15}\\
\sqrt{1}& \sqrt{3}& \sqrt{5}& -\sqrt{7} &\sqrt{9}& \sqrt{11}& \sqrt{13}&- \sqrt{15}\\
-\sqrt{1}& \sqrt{3}& \sqrt{5}& \sqrt{7} &\sqrt{9}&- \sqrt{11}& -\sqrt{13}& -\sqrt{15}\\
\sqrt{1}&- \sqrt{3}& \sqrt{5}& \sqrt{7} &-\sqrt{9}& \sqrt{11}&- \sqrt{13}&- \sqrt{15}\\
\sqrt{1}& \sqrt{3}&- \sqrt{5}& \sqrt{7} &-\sqrt{9}& -\sqrt{11}& \sqrt{13}&- \sqrt{15}\\
\sqrt{1}& \sqrt{3}& \sqrt{5}&- \sqrt{7} &-\sqrt{9}& -\sqrt{11}&- \sqrt{13}& \sqrt{15}\\
\end{bmatrix}\]

In our earlier notation, this is of the form:

\[ \begin{bmatrix}
A&B\\
A&-B\\
B^{RC}&A^{RC}\\
B^{RC}& -A^{RC}
\end{bmatrix}\]

This construction will always work as long as the top row is
an arithmetic progression.  So the general case here looks like (in the $8\times 4$ case):

\[ \frac{1}{\sqrt{4a+6b}}
\begin{bmatrix}
-\sqrt{a+3b}&\sqrt{a+2b}&\sqrt{a+b}&\sqrt{a}\\
\sqrt{a+3b}&-\sqrt{a+2b}&\sqrt{a+b}&\sqrt{a}\\
\sqrt{a+3b}&\sqrt{a+2b}&-\sqrt{a+b}&\sqrt{a}\\
\sqrt{a+3b}&\sqrt{a+2b}&\sqrt{a+b}&\sqrt{a}\\
-\sqrt{a}&\sqrt{a+b}&\sqrt{a+2b}&-\sqrt{a+3b}\\
\sqrt{a}&-\sqrt{a+b}&\sqrt{a+2b}&\sqrt{a+3b}\\
\sqrt{a}&\sqrt{a+b}&-\sqrt{a+2b}&\sqrt{a+3b}\\
\sqrt{a}&\sqrt{a+b}&\sqrt{a+2b}&-\sqrt{a+3b}\\
\end{bmatrix}\]
The columns are orthogonal, the rows square sum to 1 and the columns square sum to 2.
i.e.  This is a 2-tight frame.
We can then iterate this procedure as above to get ever larger examples.

\section{Two Tight Frames:  Weight in Front}
The idea now is to create 2-tight frames which have a big portion of their
weight at the beginning.  A group at the FRC wanted this for
their work on the Kadison-Singer Problem.  This will give an alternative concrete example
of non-2-pavable projections.  In \cite{CE} it was first shown that the class of projections
with constant diagonal $1/2$ are not two-pavable.  Later, in \cite{CF} a concrete
construction of such projections was given using variations of the discrete Fourier
transform.

Let us start with an example and then see that it is a very general concept.

\[ \frac{1}{\sqrt{64}}\begin{bmatrix}
-\sqrt{15}& \sqrt{14}&\sqrt{13}& \sqrt{12} & - \sqrt{4}&\sqrt{3}&\sqrt{2}&\sqrt{1}\\
\sqrt{15}&- \sqrt{14}&\sqrt{13}& \sqrt{12} & \sqrt{4}&-\sqrt{3}&\sqrt{2}&\sqrt{1}\\
\sqrt{15}& \sqrt{14}&-\sqrt{13}& \sqrt{12} & \sqrt{4}&\sqrt{3}&-\sqrt{2}&\sqrt{1}\\
\sqrt{15}& \sqrt{14}&\sqrt{13}&- \sqrt{12} & \sqrt{4}&\sqrt{3}&\sqrt{2}&-\sqrt{1}\\
-\sqrt{15}& \sqrt{14}&\sqrt{13}& \sqrt{12} & \sqrt{4}&-\sqrt{3}&-\sqrt{2}&-\sqrt{1}\\
\sqrt{15}& -\sqrt{14}&\sqrt{13}& \sqrt{12} &  -\sqrt{4}&\sqrt{3}&-\sqrt{2}&-\sqrt{1}\\
\sqrt{15}& \sqrt{14}&-\sqrt{13}& \sqrt{12} & - \sqrt{4}&-\sqrt{3}&\sqrt{2}&-\sqrt{1}\\
\sqrt{15}& \sqrt{14}&\sqrt{13}&- \sqrt{12} & - \sqrt{4}&-\sqrt{3}&-\sqrt{2}&\sqrt{1}\\
-\sqrt{1}&\sqrt{2}&\sqrt{3}& \sqrt{4}& -\sqrt{12}& \sqrt{13}& \sqrt{14}&\sqrt{15}\\
\sqrt{1}&-\sqrt{2}&\sqrt{3}& \sqrt{4}& \sqrt{12}&- \sqrt{13}& \sqrt{14}&\sqrt{15}\\
\sqrt{1}&\sqrt{2}&-\sqrt{3}& \sqrt{4}& \sqrt{12}& \sqrt{13}&- \sqrt{14}&\sqrt{15}\\
\sqrt{1}&\sqrt{2}&\sqrt{3}&- \sqrt{4}& \sqrt{12}& \sqrt{13}& \sqrt{14}&-\sqrt{15}\\
-\sqrt{1}&\sqrt{2}&\sqrt{3}& \sqrt{4}& \sqrt{12}&- \sqrt{13}&- \sqrt{14}&-\sqrt{15}\\
\sqrt{1}&-\sqrt{2}&\sqrt{3}& \sqrt{4}& -\sqrt{12}& \sqrt{13}& -\sqrt{14}&-\sqrt{15}\\
\sqrt{1}&\sqrt{2}&-\sqrt{3}& \sqrt{4}& -\sqrt{12}&- \sqrt{13}& \sqrt{14}&-\sqrt{15}\\
\sqrt{1}&\sqrt{2}&\sqrt{3}&- \sqrt{4}& -\sqrt{12}&- \sqrt{13}&- \sqrt{14}&\sqrt{15}\\

\end{bmatrix}\]
We note that in the matrix above, the columns are orthogonal and the columns
square sum to 2 and the rows square sum to 1.

Now we will seriously generalize this example by understanding why it is
working.   What makes this work comes from the proof of how we get the
formula for $1+2+3+\cdots +n$.   To do this, we write these numbers in
reverse and add them in pairs
\[ \begin{bmatrix}
1&2&3&\cdots & n-2 & n-1 & n\\
n & n-1& n-2 & \cdots & 3&2&1
\end{bmatrix}\] 

We note that the sum of the columns are all the same and they add up to twice the
sum of the rows.  So we will work with integers $n_1 > n_2 >\ldots >n_{2^{k-1}}$
and decide what we want the sums to equal, say m, and consider the matrix

\[ \begin{bmatrix}
\sqrt{n_1}&\sqrt{n_2} &\cdots & \sqrt{n_{2^{k-1}}} & \sqrt{m-n_{2^{k-1}}}&\sqrt{m- n_{2^{k-1}-1}}&
\cdots & \sqrt{m-n_1}\\
\sqrt{m-n_1}&\sqrt{ m-n_2}&\cdots& \sqrt{m-n_{2^{k-1}}} &\sqrt{n_{2^{k-1}}} & 
\sqrt{n_{2^{k-1}-1} }& 
\cdots & \sqrt{n_1}
\end{bmatrix}\]

Then we write the first row $2^k$ times and below it the second row
$2^k$ times.  We now need to assign {\it signs} of $\pm$ to this $2^{k+1}\times 2^k$
matrix so that the columns are orthogonal.  This is easy to do.  We use a variation
of Hadamard matrices.  i.e.
\[ A_1 = 
\begin{bmatrix}
-1 & 1&1&1\\
1 & -1&1&1\\
1&1&-1&1\\
1&1&1&-1
\end{bmatrix}\]

And by induction,
\[ A_{n+1}=\begin{bmatrix}
A_n&A_n\\
A_n & -A_n
\end{bmatrix}\]

We use this twice,  i.e.  for each set of $2^k$ copies of the two rows.
With this class of signs, we get a $2^{k+1}\times 2^k$ matrix with orthogonal columns
and square row sums 
\begin{equation}\label{Eq1} 
\sum_{i=1}^{2^{k-1}} n_i + \sum_{i=1}^{2^{k-1}}(m-n_{i}) = 2^{k-1}m,
\end{equation}
and square sums of the columns is just $2^km$.
So after normalization, this is a unit norm 2-tight frame.  

\vskip12pt
 Our examples above do this.  Also, the following works - for example:
\vskip12pt
\[32\ 31\ 30\ 29 \ \ \  \ \ \ \ \  25\ 24\ 23\ 22\ \ \   18\ 17\ 16 \ 15\ \ \    11\ 10\   9  \   8\]

\[   \   8\  \  9\  \ 10\ 11 \ \ \  \ \ \ \ \  15\ 16\ 17\ 18 \ \ \  \ \   22\ 23\ 24\ 25 \ \ \    29\ 30\ 31 \ 32  \]  

\vskip12pt

\noindent {\bf Further generalizations}:  

\begin{remark}
Note that the terms may not all be decreasing above.   This would require that
\[ n_{2^{k-1}} > m-n_{2^{k-1}}.\]
That is,
\[ m < 2n_{2^{k-1}}.\]
\end{remark}

\begin{remark}
Note that we do not really need the terms to be decreasing above.  All we really need
is for the rows to sum to $2^{k-1}m$ and the columns to sum to {\it m}.
For example,
\[ \begin{bmatrix}
20&24&19&25&1&7&2&6\\
6&2&7&1&25&14&24&20
\end{bmatrix}\]
\end{remark}

\begin{remark}  Note the we do not need the numbers to be distinct
above.  We can write one of them as many times as we like as long as we satisfy
Equation \ref{Eq1}.
\end{remark}

\begin{remark}  Note that this gives another solution to the paper of Casazza, 
Fickus, Mixon and Tremain \cite{CF} where they used variations of the DFT to construct
non-2-pavable matrices.  A careful choice of the numbers above putting most of the
weight in the first $n-1$ positions then works in their proof.
\end{remark}

Here is another {\it outline} for constructing 2-tight frames.  Fix $a,b,c,d$
and let $m=a+b+c+d$.

\[ \begin{bmatrix}
a&b&c&d&m-d&m-c&m-b&m-a\\
m-1&m-b&m-c&m-d&d&c&b&a
\end{bmatrix}\]

Then, the sum of the column numbers is $a+b+c+d$ and we have 8 of these
so the total sum after we turn this into a matrix is $8m$ while the sum of the rows
is $4m$.   So, after normalization, this is a unit norm 2-tight frame.

\section{General Tight Frames}

We will generalize one of our earlier examples.

\begin{theorem}
Let $\{A_i\}_{i=1}^n$ be $r\times k_i$ matrices with orthogonal
columns.  Let
\[ \left ( a_{ij} \right )_{i=1,j=1}^{\ m\  ,\ n},\mbox{ be a matrix
with orthogonal columns}.\]
Then
\[ \begin{bmatrix}
a_{11}A_1& a_{12}A_2 & \cdots &a_{1n}A_n\\
a_{21}A_1&a_{22}A_2& \cdots &a_{2n}A_n\\
\vdots & \vdots& \vdots& \vdots\\
a_{m1}A_1&a_{m2}A_2 & \cdots & a_{m n}A_n
\end{bmatrix}\]
is a $mr\times L$ matrix with orthogonal columns where
\[ L = \sum_{i=1}^n k_i.\]
\end{theorem}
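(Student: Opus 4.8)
The plan is to prove this by a direct column-by-column computation, exactly in the spirit of the proof of the previous theorem (``a direct calculation gives that $B^*B=I$''), the only new feature being that the blocks $A_i$ are now rectangular and may have different widths, so that $B^*B$ will be diagonal rather than the identity. First I would fix notation. Index the columns of the displayed matrix $B$ by pairs $(j,p)$ with $1\le j\le n$ and $1\le p\le k_j$, and let $c_{j,p}\in\CC^r$ denote the $p$th column of $A_j$. Then the $(j,p)$ column of $B$ is the vector in $\CC^{mr}$ obtained by stacking $a_{1j}c_{j,p},\,a_{2j}c_{j,p},\,\dots,\,a_{mj}c_{j,p}$. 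Counting shows $B$ has $\sum_{j=1}^n k_j=L$ columns and $m r$ rows, which already gives the size assertion; it remains to check that distinct columns are orthogonal.

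Next I would compute the inner product of the $(j,p)$ column $u$ of $B$ with the $(j',q)$ column $v$. Since the stacking respects the block structure, the sum over the $mr$ coordinates of $\ip{u}{v}$ groups into $m$ inner products in $\CC^r$, and the scalar factors pull out, using that $\ip{c_{j,p}}{c_{j',q}}$ does not depend on $i$:
\[ \ip{u}{v}=\sum_{i=1}^m \ip{a_{ij}c_{j,p}}{a_{ij'}c_{j',q}} = \Bigl(\sum_{i=1}^m a_{ij}\overline{a_{ij'}}\Bigr)\,\ip{c_{j,p}}{c_{j',q}}. \]
Now split into the two ways distinct columns of $B$ can arise. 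If $j\ne j'$, the first factor is the inner product of columns $j$ and $j'$ of the matrix $(a_{ij})$, hence $0$ by hypothesis. If $j=j'$ but $p\ne q$, the second factor is $\ip{c_{j,p}}{c_{j,q}}=0$ because $A_j$ has orthogonal columns. In either case $\ip{u}{v}=0$, so the columns of $B$ are pairwise orthogonal. Equivalently, partitioning $B$ into its $n$ block columns shows $B^*B$ is block-diagonal with $j$th diagonal block $\bigl(\sum_{i=1}^m\abs{a_{ij}}^2\bigr)A_j^*A_j$, and each $A_j^*A_j$ is itself diagonal.

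I do not expect any genuine obstacle here beyond indexing care: the whole content is the observation that in $\ip{u}{v}$ the ``scalar part'' $\sum_i a_{ij}\overline{a_{ij'}}$ and the ``vector part'' $\ip{c_{j,p}}{c_{j',q}}$ decouple, so it suffices that one of the two factors vanish in each of the two cases, and orthogonality of the columns of $(a_{ij})$ handles the cross-block case while orthogonality of the columns of each $A_j$ handles the within-block case. It is worth emphasizing in the write-up that we never need the $A_j$ to have a common number of columns, nor columns of equal norm, nor $(a_{ij})$ to be square; this is precisely why the statement subsumes the earlier unitary construction (take all $A_j$ square and unitary and $(a_{ij})$ square and unitary, so that every diagonal block of $B^*B$ becomes the identity). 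I would close by recalling that rescaling the columns of $B$ to a common square-sum leaves the columns orthogonal and turns the row vectors of $B$ into a tight frame for $\CC^{L}$, which is the intended application.
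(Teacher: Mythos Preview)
Your proposal is correct and is exactly the direct column-by-column computation the paper has in mind; the paper's own proof of this theorem is the single word ``Obvious.'' Your write-up simply makes explicit the factorization $\ip{u}{v}=\bigl(\sum_i a_{ij}\overline{a_{ij'}}\bigr)\ip{c_{j,p}}{c_{j',q}}$ that the paper leaves to the reader.
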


\begin{proof}
Obvious.
\end{proof}

This leads to:

\begin{theorem}
Let $A_1,A_2,\ldots,A_n$ be $km \times m$ matrices with unit norm rows and
orthogonal columns which square sum to $km$.  i.e.  These are 
unit norm $k$-tight frames.
Let
\[ \left ( a_{ij} \right )_{i,j=1}^n,\mbox{ be an orthonormal matrix}.\]
Then the following matrix is a $kmn\times mn$ unit norm tight frame.
\[ \begin{bmatrix}
a_{11}A_1& a_{12}A_2 & \cdots &a_{1n}A_n\\
a_{21}A_1&a_{22}A_2& \cdots &a_{2n}A_n\\
\vdots & \vdots& \vdots& \vdots\\
a_{n1}A_1&a_{2n}A_2 & \cdots & a_{nn}A_n
\end{bmatrix}\]
\end{theorem}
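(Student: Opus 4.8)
The plan is to verify directly that the displayed matrix $B$ has unit-norm rows and orthogonal columns which square-sum to $kmn$, which is exactly the definition of a unit-norm tight frame given in the statement. Since the previous theorem already tells us that $B$ (being of the block form $(a_{ij}A_i)$ with each $A_i$ having orthogonal columns and $(a_{ij})$ having orthogonal columns) is an $mn\times mn$... wait, a $kmn \times mn$ matrix with orthogonal columns, the only thing that remains is to compute the norms of the rows and the norms of the columns. So the real content is two scalar computations.

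First I would handle the columns. Group the $mn$ columns of $B$ into $n$ blocks, the $i$-th block consisting of the $m$ columns coming from the $A_i$-part. A column in block $i$ has the form $(a_{1i}v, a_{2i}v, \ldots, a_{ni}v)^T$ where $v$ is the corresponding column of $A_i$ (sitting in $\CC^{km}$). Its square norm is $\left(\sum_{j=1}^n |a_{ji}|^2\right)\|v\|^2 = 1 \cdot \|v\|^2$, using that $(a_{ij})$ is orthonormal (columns square-sum to one). But $v$ is a column of the $k$-tight frame matrix $A_i$, and by hypothesis its columns square-sum to $km$, i.e.\ $\|v\|^2 = km$. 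Hence every column of $B$ has square norm $km$. Since there are $mn$ columns... the columns square sum to $km$ each, matching the requirement ``columns which square sum to $kmn$'' — here I should double-check the intended normalization in the statement: a $kmn \times mn$ unit-norm $1$-tight frame should have columns of square norm $\frac{(\text{number of rows})\cdot 1}{\text{number of columns}} \cdot (\text{number of columns}) $... actually for a unit-norm tight frame of $M$ vectors in $\CC^N$ the frame bound is $M/N$, so each column of the synthesis matrix has square norm $M/N = kmn/(mn) = k$; but the $A_i$ are described as having columns square-summing to $km$, which corresponds to frame bound $km/m = k$, consistent. I would state the column square-sum as $km$ and note this is the correct $k$-tight normalization for $mn$ vectors in $\CC^{mn}$... (the phrase ``square sum to $kmn$'' in the statement I would interpret/correct as the total being $kmn\cdot(\text{something})$, or simply assert each column squares to $km$).

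Next the rows. Index the rows of $B$ by a pair $(p, s)$ where $p \in \{1,\ldots,n\}$ selects the block-row and $s \in \{1,\ldots,km\}$ selects the row within that block of height $km$. The $(p,s)$-row of $B$ is $(a_{p1} r_1, a_{p2} r_2, \ldots, a_{pn} r_n)$ where $r_i$ is the $s$-th row of $A_i$, a vector in $\CC^m$. Its square norm is $\sum_{i=1}^n |a_{pi}|^2 \|r_i\|^2$. Now each $A_i$ has unit-norm rows, so $\|r_i\|^2 = 1$ for all $i$, giving square norm $\sum_{i=1}^n |a_{pi}|^2 = 1$ since $(a_{ij})$ is orthonormal (rows square-sum to one). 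So every row of $B$ is a unit vector, as required. Combined with the orthogonality of columns from the previous theorem, $B$ is a unit-norm tight frame, and I would close by remarking (as a sanity check) that the total $\sum$ of column square-norms equals $mn \cdot km = km^2n$, which also equals (number of rows)$\,\times\,$(row square-norm)$\,\times\, k / \ldots$; in any case the frame bound comes out to $k$.

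The main obstacle is essentially bookkeeping: there is no deep step here, just keeping the three index ranges ($n$ blocks, $km$ rows per block, $m$ columns per block) straight and invoking the correct orthonormality property of $(a_{ij})$ (columns-square-to-one for the column computation, rows-square-to-one for the row computation). The one genuinely delicate point is reconciling the normalization constants — the statement says ``square sum to $kmn$'' where I would expect each column to square to $km$ — so I would be explicit about which quantity I am computing and point out that this gives precisely a $k$-tight frame of $mn$ unit vectors in dimension $mn$, so that no rescaling of the blocks is needed beyond what is already built into the $A_i$ and into $(a_{ij})$ being orthonormal.
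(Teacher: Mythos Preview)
Your verification is correct and is exactly the intended argument: the paper's own proof is the single word ``Obvious,'' so you have simply written out the direct check (orthogonality of columns from the previous theorem, then the two scalar computations for row and column norms using that $(a_{ij})$ has orthonormal rows and columns). One clarification worth making cleanly rather than hedging: the phrase ``columns which square sum to $km$'' in the hypothesis is inconsistent with the unit-norm-rows assumption unless it is read as the \emph{total} over all $m$ columns; each individual column of a $km\times m$ unit-norm $k$-tight frame has square norm $k$, and your computation then gives each column of $B$ square norm $k$ as well, which is the correct tight-frame normalization for a $kmn\times mn$ unit-norm frame.
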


\begin{proof}
Obvious.
\end{proof}

\begin{remark}
The same theorems hold with {\it rows} replaced by {\it columns}.  
\end{remark}

\section{Unbiased Bases}

Here is a way to get three unbiased bases in $\RR^4$.  One will be the unit vectors.
The other two are below where $+,-$ means $\pm1$:
\[  \frac{1}{2}
\begin{bmatrix}
-&+&+&+\\
+&-&+&+\\
+&+&-&+\\
+&+&+&-\\
\end{bmatrix}\]
And the last basis:
\[ \frac{1}{2}\begin{bmatrix}
+&+&+&+\\
+&+&-&-\\
+&-&-&-\\
+&-&+&-
\end{bmatrix}\]

\end{document}